\newtheorem{theorem}{Theorem}[section]
\pgfplotsset{compat=1.15}
\newtheorem{proposition}[theorem]{Proposition}
\newtheorem{lemma}[theorem]{Lemma}
\newtheorem{corollary}[theorem]{Corollary}
\title{On the subadditivity condition of edge ideal}
\author{Abed Abedelfatah}
\address{Department of Mathematics, Braude College of Engineering, 2161002 Karmiel, Israel}
\email{abed@braude.ac.il}
\keywords{Betti numbers, Simplicial complex, Edge ideal, Monomial ideal, Subadditivity condition}
\begin{document}
\maketitle
\begin{abstract}
Let $S=K[x_1,\ldots,x_n]$, where $K$ is a field, and $t_i(S/I)$ denotes the maximal shift in the minimal graded free $S$-resolution of the graded algebra $S/I$ at degree $i$, where $I$ is an edge ideal. In this paper, we prove that if $t_b(S/I)\geq \lceil \frac{3b}{2} \rceil$ for some $b\geq 0$, then the subadditivity condition $t_{a+b}(S/I)\leq t_a(S/I)+t_b(S/I)$ holds for all $a\geq 0$. In addition, we prove that $t_{a+4}(S/I)\leq t_a(S/I)+t_4(S/I)$ for all $a\geq 0$ (the case $b=0,1,2,3$ is known). We conclude that if the projective dimension of $S/I$ is at most $9$, then $I$ satisfies the subadditivity condition.
\end{abstract}

\section{Introduction}
Let $S=K[x_1,\ldots,x_n]$ denote the polynomial ring with $n$ variables over the fixed field
$K$, graded by setting $\deg(x_i)=1$ for each variable. Since Hilbert’s syzygy theorem,
minimal free resolutions of graded finitely generated $S$-modules, and particularly their
graded Betti numbers, which carry most of the numerical data
about them, became central invariants of study in commutative algebra, with
applications in other areas, e.g., in algebraic geometry, hyperplane arrangements, and combinatorics.
Restricting to $S$-modules $S/I$ for monomial ideals $I$, and particularly to edge ideals, makes combinatorial,
and particularly graph theoretical, tools available. This perspective proved to be very useful in recent decades.

Assume that $I$ is a graded ideal of $S$ and suppose $S/I$ has minimal graded free $S$-resolution
$$0\rightarrow F_p=\bigoplus_{j\in \mathbb{N}}S(-j)^{\beta _{p,j}}\rightarrow\cdots\rightarrow F_1=\bigoplus_{j\in \mathbb{N}}S(-j)^{\beta _{1,j}}\rightarrow F_0=S\rightarrow S/I\rightarrow0.$$
The numbers $\beta_{i,j}=\beta_{i,j}(S/I)$, where $i,j\geq0$, are called the \emph{graded Betti numbers} of $I$, which count the elements of degree $j$ in a minimal generator of ($i+1$)-th \emph{syzygy}:~ $\mathrm{Syz}_{i+1}(S/I)=\ker~(F_i\rightarrow F_{i-1})$. Let $t_i$ denote the maximal shifts in the minimal graded free $S$-resolution of $S/I$, namely
\[t_i=t_i(S/I):= \max\{j:\ \beta_{i,j}(S/I)\neq 0\}.\]
We say that $I$ satisfies the \emph{subadditivity condition} if
\begin{equation}\label{eq2}
t_{a+b}\leq t_a+t_b
\end{equation}
for all $a,b\geq0$ and $a+b\leq \mathrm{pd}_S(S/I)$, where $\mathrm{pd}_S(S/I)$ is the projective dimension of $S/I$.

It is known that graded ideals may not satisfy the subadditivity condition as shown by the counter example in \cite[Section 6]{Conca-Sub}. However, no counter examples are known for monomial ideals. Note that if $I$ is a monomial ideal, then by polarization, we can reduce the problem to a squarefree monomial ideal.
In the case of monomial ideals, there are special cases for which the subadditivity condition holds: Herzog and Srinivasan \cite[Corollary 4]{Herzog-Srinivasan} proved (\ref{eq2}) for $b=1$ which was proved earlier for edge ideals in \cite[Theorem 4.1]{Oscar}. The author and Nevo proved (\ref{eq2}) when $b=2,3$ for edge ideals \cite[Theorem 1.3]{Abed-Eran}. Bigdeli and Herzog proved (\ref{eq2}), for edge ideal of a chordal graph or a whisker graph\cite[Theorem 1]{mina-herzog}. The author proved (\ref{eq2}) when $I=I_{\Delta}$ where $\Delta$ is a simplicial complex such that $\dim(\Delta)< t_b-b$ \cite[Theorem 3.3]{Abed}. Jayanthan and Kumar proved (\ref{eq2}) for several classes of edge ideals of graphs and path ideals of rooted trees \cite{Jayanthan-Kumer}. Some more results regarding subadditivity condition have been obtained by Khoury and Srinivasan \cite[Theorem 2.3]{Khoury-Srin}, Faridi and Shahada \cite[Corollarly 5.3]{Faridi-Shahada} and Faridi \cite[Theorem 3.7]{Faridi}.

Let $I$ be an edge ideal of graph $G$. By using topological-combinatorics arguments, we prove that, over any field, if $t_b(S/I)\geq \lceil \frac{3b}{2} \rceil$ for some $b\geq 0$, then for all $a\geq0$, $t_{a+b}(S/I)\leq t_a(S/I)+t_b(S/I)$. Note that in general $b<t_b\leq 2b$. In addition, we prove the subadditivity condition (\ref{eq2}) for $0\leq b\leq 4$ and all $a\geq0$.
We conclude that if $t_j\geq \lceil \frac{3j}{2} \rceil$ for all $5\leq j\leq \lfloor \frac{\mathrm{pd}_S(S/I)}{2}\rfloor$, then $I$ satisfies the subadditivity condition. In particular, if $\mathrm{pd}_S(S/I)\leq 9$, then $I$ satisfies the subadditivity condition.\\

\section{Preliminaries }
Fix a field $K$.
Let $S=K[x_1,\dots,x_n]$ be the graded polynomial ring with $\deg(x_i)=1$ for all $i$, and $M$ be a graded $S$-module. The integer $\beta_{i,j}^S(M)=\dim_KTor_i^S(M,K)_j$ is called the $(i,j)$\emph{th graded Betti number of} $M$. Note that if $I$ is a graded ideal of $S$, then $\beta_{i+1,j}^S(S/I)=\beta_{i,j}^S(I)$ for all $i,j\geq 0$.

For a simplicial complex $\Delta$ on the vertex set $\Delta_0=[n]=\{1,\dots,n\}$, its \emph{Stanley-Reisner ideal} $I_{\Delta}\subset S$ is the ideal generated by the squarefree monomials $x_F=\prod_{i\in F}x_i$ with $F\notin \Delta$, $F\subset [n]$. The dimension of the face $F$ is $|F|-1$ and the \emph{dimension of $\Delta$} is $\max\{\dim F~:~F\in\Delta\}$.
Let $G$ be a simple graph on the set $[n]$ and denote by $E(G)$ the set of its edges. We define the \emph{edge ideal} of $G$ to be the ideal
\[
I(G)=\langle x_ix_j~:~\{i,j\}\in E(G)\rangle\subset S.
\]
So if $\Delta$ is a flag simplicial complex and $H$ is the graph of minimal non-faces of $\Delta$, then $I_\Delta=I(H)$.

For $W\subset V$, we write $$\Delta[W]=\{F\in\Delta~:~F\subset W\}$$ for the induced subcomplex of $\Delta$ on $W$.
We denote by $\beta_i(\Delta)=\dim_K \widetilde{H}_i(\Delta;K)$ the dimension of the $i$-th reduced homology group of $\Delta$ with coefficients in $K$.
The following result is known as Hochster's formula for graded Betti numbers.

\begin{theorem}[Hochster]
Let $\Delta$ be a simplicial complex on $[n]$. Then
$$\beta_{i,i+j}(S/I_{\Delta})=\sum_{W\subset[n],~|W|=i+j}\beta_{j-1}(\Delta[W];K)$$ for all $i,j\geq0$.
\end{theorem}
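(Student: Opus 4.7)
The plan is to compute $\mathrm{Tor}^S_i(S/I_\Delta,K)$ by tensoring $S/I_\Delta$ with the Koszul complex $K_\bullet=K_\bullet(x_1,\ldots,x_n;S)$, which is a minimal graded free resolution of $K$ over $S$. This gives $\beta_{i,j}(S/I_\Delta)=\dim_K H_i\bigl((S/I_\Delta)\otimes_S K_\bullet\bigr)_j$. Both $S/I_\Delta$ and $K_\bullet$ carry natural $\mathbb{Z}^n$-multigradings refining the usual $\mathbb{Z}$-grading, so the $j$-th graded Betti number decomposes as a sum $\beta_{i,j}=\sum_{|\mathbf{a}|=j}\beta_{i,\mathbf{a}}$ indexed by multidegrees $\mathbf{a}\in\mathbb{Z}^n_{\geq 0}$.

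First I would argue that since $I_\Delta$ is a squarefree monomial ideal, the multigraded Betti numbers vanish outside squarefree multidegrees. A short chain-level check with the Koszul differential shows that any multidegree $\mathbf{a}$ with some coordinate $a_i\geq 2$ yields a contractible strand of $(S/I_\Delta)\otimes K_\bullet$, hence contributes nothing to homology. Thus the sum collapses to one over subsets $W\subseteq[n]$ identified with their characteristic vectors, and the constraint becomes $|W|=i+j$.

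Next, for each such $W$ I would identify the $W$-multigraded strand of $(S/I_\Delta)\otimes_S K_\bullet$ with a shift of the reduced chain complex of the induced subcomplex $\Delta[W]$. A $K$-basis for the $W$-piece of $(S/I_\Delta)\otimes K_p$ is indexed by $p$-subsets $F\subseteq W$ such that the complementary monomial $x_{W\setminus F}$ is nonzero in $S/I_\Delta$, i.e., $W\setminus F\in\Delta$. Under the complementation bijection $F\leftrightarrow W\setminus F$, which converts a $p$-subset of $W$ into a $(|W|-p-1)$-dimensional face of $\Delta[W]$, the induced Koszul differential matches, up to an overall sign convention, the simplicial boundary map. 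Hence $H_i\bigl((S/I_\Delta)\otimes K_\bullet\bigr)_W\cong\widetilde{H}_{|W|-i-1}(\Delta[W];K)$.

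Summing over $W$ with $|W|=i+j$ and observing that $|W|-i-1=j-1$ produces the stated formula. The main technical obstacle lies in the third step: pinning down the sign conventions so that the Koszul differential really is transported to the simplicial boundary under the complementation bijection, and tracking the homological degree shift that replaces $i$ by $|W|-i-1=j-1$. Once that chain-level identification is in place, the rest is bookkeeping and a direct application of Hochster's multigraded decomposition.
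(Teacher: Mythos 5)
The paper does not prove this statement: Hochster's formula is quoted in the preliminaries as a classical result, with no argument given, so there is no in-paper proof to compare against. Your proposal is the standard Koszul-complex proof and is correct in outline: computing $\mathrm{Tor}_i^S(S/I_\Delta,K)$ from $(S/I_\Delta)\otimes_S K_\bullet$, discarding non-squarefree multidegrees (which indeed admit an explicit contracting homotopy, or can be excluded more cheaply by noting that the Taylor complex only involves lcm's of squarefree generators), and identifying the squarefree strand indexed by $W$ with a complex built from the faces of $\Delta[W]$ via $F\mapsto W\setminus F$. One point you should tighten in the step you yourself flag as the technical crux: under the complementation bijection the Koszul differential $K_p\to K_{p-1}$ sends the basis element labelled by the face $G=W\setminus F$ to a signed sum over the faces $G\cup\{i\}$ with $i\in F$, so it is transported to the simplicial \emph{coboundary}, not the boundary; the $W$-strand is therefore the reduced cochain complex of $\Delta[W]$ (suitably reindexed) and its $p$-th homology is $\widetilde{H}^{\,|W|-p-1}(\Delta[W];K)$ rather than $\widetilde{H}_{|W|-p-1}(\Delta[W];K)$. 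Since $K$ is a field, $\dim_K\widetilde{H}^{\,k}=\dim_K\widetilde{H}_k$ by the universal coefficient theorem (or by dualizing the complex), so the stated formula for $\beta_{i,i+j}(S/I_\Delta)$ follows all the same; you just need this one extra sentence to close the gap between cohomology and homology.
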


If $\Delta_1$ and $\Delta_2$ are two subcomplexes of $\Delta$ such that $\Delta=\Delta_1\cup \Delta_2$, then there is a long exact sequence of reduced homologies, called the \emph{Mayer-Vietoris sequence}
\begin{align*}
\cdots \rightarrow \widetilde{H}_i(\Delta_1\cap\Delta_2;K)\rightarrow \widetilde{H}_i(\Delta_1;K)\oplus \widetilde{H}_i(\Delta_2;K)&\rightarrow \widetilde{H}_i(\Delta;K)\\\rightarrow \widetilde{H}_{i-1}(\Delta_1\cap\Delta_2;K)\rightarrow\cdots
\end{align*}

Using the Mayer-Vietoris sequence, Fern{\'a}ndez-Ramos and Gimenez proved the following lemma that we will use in the main results.
\begin{lemma}\emph{(\hspace{1sp}\cite[Theorem 2.1]{Oscar})}\label{thm-corner}
For an edge ideal $I=I(G)$, over any field, if $\beta_{i,i+j}(S/I)=0=\beta_{i,i+j+1}(S/I)$ then $\beta_{i+1,i+j+2}(S/I)=0$.
\end{lemma}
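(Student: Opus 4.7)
The plan is to apply Hochster's formula to translate the conclusion $\beta_{i+1,i+j+2}(S/I)=0$ into the vanishing of $\widetilde{H}_j(\Delta[W];K)$ for every $W\subseteq[n]$ with $|W|=i+j+2$, where $\Delta$ is the independence complex of $G$ (which is flag because $I$ is an edge ideal). The two hypotheses translate in the same way to the vanishing of $\widetilde{H}_{j-1}(\Delta[W'];K)$ whenever $|W'|=i+j$, and of $\widetilde{H}_j(\Delta[W'];K)$ whenever $|W'|=i+j+1$.

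Now fix $W$ with $|W|=i+j+2$. If $W\in\Delta$, then $\Delta[W]$ is a full simplex, hence contractible, and there is nothing to prove. Otherwise, the flag property of $\Delta$ guarantees the existence of vertices $v,w\in W$ with $\{v,w\}\notin\Delta$. Flagness then forces every face of $\Delta[W]$ to omit at least one of $v$ or $w$, which yields the decomposition
\[\Delta[W]=\Delta[W\setminus\{v\}]\cup\Delta[W\setminus\{w\}],\qquad \Delta[W\setminus\{v\}]\cap\Delta[W\setminus\{w\}]=\Delta[W\setminus\{v,w\}].\]

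Plugging this into the Mayer-Vietoris sequence in degree $j$ gives
\[\widetilde{H}_j(\Delta[W\setminus\{v\}])\oplus \widetilde{H}_j(\Delta[W\setminus\{w\}])\longrightarrow \widetilde{H}_j(\Delta[W])\longrightarrow \widetilde{H}_{j-1}(\Delta[W\setminus\{v,w\}]).\]
The left-hand term vanishes because each summand is the reduced homology of an induced subcomplex on $i+j+1$ vertices, and the right-hand term vanishes because $|W\setminus\{v,w\}|=i+j$; both vanishings are exactly what the hypotheses provide. Hence $\widetilde{H}_j(\Delta[W])=0$, as required.

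The main point is picking the right decomposition. A star/deletion Mayer-Vietoris around a single vertex would produce a link of the form $\Delta[N(v)\cap(W\setminus\{v\})]$, whose cardinality need not match either of the two sizes in the hypotheses, forcing an auxiliary inductive step. The two-vertex decomposition works precisely because the flag hypothesis produces a non-edge inside $W$ whenever $W\notin\Delta$, and the three induced subcomplexes appearing in Mayer-Vietoris then have sizes $i+j+1,\,i+j+1,\,i+j$, matching the hypotheses exactly. The only potential obstacle is the edge case where no non-edge is available inside $W$, and flagness handles that case by making $\Delta[W]$ a simplex.
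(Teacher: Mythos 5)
Your proof is correct, and it is essentially the intended one: the paper does not reprove this lemma but quotes it from Fern\'andez-Ramos and Gimenez, explicitly noting that their argument uses the Mayer--Vietoris sequence, which is exactly the route you take via Hochster's formula and the non-edge decomposition $\Delta[W]=\Delta[W\setminus\{v\}]\cup\Delta[W\setminus\{w\}]$ of the (flag) independence complex. The size bookkeeping ($i+j+1$, $i+j+1$, $i+j$ for the two pieces and their intersection) and the degenerate case $W\in\Delta$ are both handled correctly, so there is nothing to add.
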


Finally we state the following results of the author and Nevo on vanishing patterns in the Betti table of edge ideals and the subadditivity condition.

\begin{lemma}\emph{(\hspace{1sp}\cite[Theorem 3.4]{Abed-Eran})}\label{thm-vanishing}
For an edge ideal $I=I(G)$, over any field, if $\beta_{i,i+2}(S/I)\neq 0$ and $\beta_{i+k,j+2+k}(S/I)\neq 0$ where $i\geq 0$ and $k>0$, then $$\beta_{i+m,j+2+m}(S/I)\neq 0$$ for all $0\leq m\leq k$.
\end{lemma}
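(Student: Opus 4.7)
The plan is to use Hochster's formula to translate the statement into a question about induced subcomplexes of the independence complex $\Delta=\mathrm{Ind}(G)$ (so that $I=I_{\Delta}$), and then to interpolate between the two given witnesses. By Hochster's theorem, $\beta_{i+m,i+m+2}(S/I)\neq 0$ if and only if there exists $W_m\subseteq[n]$ with $|W_m|=i+m+2$ and $\widetilde H_1(\Delta[W_m];K)\neq 0$; the task therefore reduces to producing, from witnesses $W_0$ (of size $i+2$) and $W_k$ (of size $i+k+2$), a witness of every intermediate size $i+m+2$, $1\le m\le k-1$.

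I would proceed by downward induction on the size. Starting from $W_k$, I would attempt, at each stage, given $W$ with $|W|=s+1$ and $\widetilde H_1(\Delta[W];K)\neq 0$, to find a subset of size $s$ whose induced subcomplex still has nontrivial first homology. For a fixed vertex $v\in W$, the Mayer--Vietoris sequence applied to
\[
\Delta[W]=\mathrm{star}_{\Delta[W]}(v)\,\cup\,\Delta[W\setminus\{v\}],
\]
whose intersection is $\link_{\Delta[W]}(v)$ and whose first component is a cone, hence acyclic, yields
\[
\widetilde H_1(\link_{\Delta[W]}(v);K)\to\widetilde H_1(\Delta[W\setminus\{v\}];K)\to\widetilde H_1(\Delta[W];K)\to\widetilde H_0(\link_{\Delta[W]}(v);K).
\]
If some $v\in W$ makes the rightmost arrow fail to be injective, then $\widetilde H_1(\Delta[W\setminus\{v\}];K)\neq 0$, and a smaller witness is obtained.

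The hard part will be the case when no such $v$ exists, that is, when every single-vertex deletion annihilates $\widetilde H_1$. To force existence of such a $v$, I would bring the small witness $W_0$ into the picture by considering sets of the form $(W\setminus T_1)\cup T_2$ with $T_1\subseteq W$ and $T_2\subseteq W_0$, and would combine the local Mayer--Vietoris constraints above with Lemma~\ref{thm-corner}, which for an edge ideal forbids two consecutive zeros in a row of the Betti table from abutting a nonzero corner. The combinatorial content is that an isolated ``hole'' in row~$2$ cannot be flanked on both sides by nonzero entries, because such a configuration would, when iterated via Lemma~\ref{thm-corner}, propagate into a zero block wider than the right-hand witness $\beta_{i+k,i+k+2}\neq 0$ permits. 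This propagation-of-nonvanishing step, which uses essentially that $I$ is an edge ideal (all minimal generators have degree~$2$), is the main obstacle.
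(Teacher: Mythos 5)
First, a point of reference: the paper does not prove this lemma at all --- it is quoted (with an apparent typo, $j$ where $i$ is meant in the second hypothesis and the conclusion) from \cite[Theorem 3.4]{Abed-Eran} and used as a black box, so the only proof to compare yours against lives in that external source. Your setup is sound as far as it goes: Hochster's formula correctly reduces the claim to producing, for every intermediate $m$, a set $W_m$ of size $i+m+2$ with $\widetilde H_1(\Delta[W_m];K)\neq 0$, and your Mayer--Vietoris computation for the decomposition of $\Delta[W]$ into $\mathrm{star}_{\Delta[W]}(v)$ and $\Delta[W\setminus\{v\}]$ is correct: if the map $\widetilde H_1(\Delta[W];K)\to\widetilde H_0(\link_{\Delta[W]}(v);K)$ fails to be injective for some $v$, the deletion is a smaller witness.

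However, the step you defer is not a technical loose end; it is the entire content of the theorem. A large witness need not contain any smaller one: for $G=C_5$ the complex $\mathrm{Ind}(C_5)$ is a $5$-cycle, so $\widetilde H_1\neq 0$, while every vertex-deleted induced subcomplex is the independence complex of a path and is contractible --- the downward induction from $W_k$ alone stalls at the first step. So the small witness $W_0$ must enter essentially, and your plan for making it do so (passing to sets $(W\setminus T_1)\cup T_2$ and ``iterating'' Lemma~\ref{thm-corner}) is a restatement of the difficulty rather than an argument. In particular, Lemma~\ref{thm-corner} propagates \emph{vanishing}: it needs two consecutive zeros $\beta_{i,i+j}=\beta_{i,i+j+1}=0$ as input and outputs another zero, and you give no mechanism for extracting such a pair of zeros from the hypothesized failure of every single-vertex deletion, nor for converting the resulting zero block into the required \emph{non}vanishing at one specific intermediate spot of the $2$-strand. (Note also that a gap in the $2$-strand says nothing a priori about the $3$-strand entries $\beta_{i,i+3}$ that Lemma~\ref{thm-corner} would need to see vanish.) As written, the proposal establishes only the easy half of the interpolation and leaves the theorem unproved; filling it requires the finer analysis carried out in \cite[Theorem 3.4]{Abed-Eran} of which induced subgraphs support nontrivial $\widetilde H_1$ of the independence complex.
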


\begin{lemma}\emph{(\hspace{1sp}\cite[Theorem 1.3]{Abed-Eran})}\label{thm-abederan}
For any edge ideal over any field, the subadditivity condition (\ref{eq2}) holds for $b=1,2,3$ and any  $a\geq 0$.
\end{lemma}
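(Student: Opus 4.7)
The case $b=1$ is the Herzog-Srinivasan theorem for arbitrary graded ideals (sharpened for edge ideals in \cite{Oscar}), which I would simply invoke. For $b=2,3$ I would pass to the normalized invariants $r_i:=t_i(S/I)-i$, rephrasing the desired inequality as $r_{a+b}\leq r_a+r_b$. Since every minimal generator of $I$ has degree~$2$, $r_1=1$, and iterating $b=1$ yields the a priori bound $r_i\leq i$. Stratifying by $r_b$, the extremal cases $r_2=2$ and $r_3=3$ follow at once by iterating $b=1$, and $r_3=2$ follows by combining $b=1$ with $b=2$; the essentially new sub-cases are $r_2=1$ and $r_3=1$.

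For $r_2=1$, a first useful observation is that Lemma~\ref{thm-corner} applied repeatedly propagates the vanishings $\beta_{2,j}(S/I)=0$ for $j\geq 4$ vertically down the Betti table, yielding $\beta_{k,j}(S/I)=0$ for $k\geq 2$ and $j\geq 2k$, and hence $r_k\leq k-1$; this is not yet sufficient when $r_a$ is small compared with $a-1$. I would then run a double induction, outer on the number of variables $n$ and inner on $a$. Suppose for contradiction $r_{a+2}\geq r_a+2$; by Hochster's formula there exists $W\subseteq[n]$ with $|W|=a+r_a+4$ and $\widetilde{H}_{r_a+1}(\Delta[W];K)\neq 0$, where $\Delta$ is the flag Stanley-Reisner complex of $I$. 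Since the hypothesis $r_2\leq 1$ is inherited by the edge ideal of the induced subgraph on $W$, I may assume $W=[n]$. Picking $v\in[n]$ and applying Mayer-Vietoris to the decomposition $\Delta=\mathrm{del}_\Delta(v)\cup\mathrm{star}_\Delta(v)$ (the star being contractible), the nonvanishing homology persists either on $\mathrm{del}_\Delta(v)=\Delta[[n]\setminus\{v\}]$ or, one degree lower, on $\mathrm{link}_\Delta(v)=\Delta[N(v)]$, which is again a flag induced subcomplex because links of vertices in flag complexes are flag. In the deletion case, Hochster immediately gives $\beta_{a+1,(a+1)+(r_a+2)}(S/I)\neq 0$, contradicting the $b=1$ bound. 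In the link case one obtains nonvanishing of $\beta_{i',i'+(r_a+1)}(S/I)$ for some $i'\leq a+2$, and the argument is re-iterated on the link (picking a new vertex, splitting again) until a configuration falls under the outer inductive hypothesis and yields a contradiction.

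For $b=3$, $r_3=1$ the strategy is the same in outline, but Lemma~\ref{thm-vanishing} enters as the key additional tool: it forces any nonzero entry on the $j=2$ subdiagonal of the Betti table to extend to a long unbroken stretch, which rigidifies the possible positions of the hypothetical extremal $\beta_{a+3,(a+3)+(r_a+2)}(S/I)$ and combines cleanly with the vertical vanishing propagation from Lemma~\ref{thm-corner}. The main obstacle I anticipate is the link case of the Mayer-Vietoris reduction: unlike the deletion, the link produces a nonvanishing homology class on an induced subcomplex whose vertex count $m$ and resulting homological index $m-r_a-1$ behave irregularly, so one has to organize the induction (probably on the pair $(n,a)$ in lexicographic order) so that every link configuration encountered is either strictly smaller than, or already covered by, the inductive hypothesis. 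For $b=3$ this bookkeeping is noticeably more intricate than for $b=2$, and I expect this to be where most of the technical effort lies.
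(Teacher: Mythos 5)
First, note that the paper does not actually prove this lemma: it is quoted verbatim from \cite[Theorem 1.3]{Abed-Eran}, so there is no in-paper proof to compare yours against; I am judging your plan on its own terms and against the technique this paper uses for the analogous statements (Proposition \ref{pro1} and Theorem \ref{a=4}). Your preliminary reductions already contain a concrete gap: the sub-case $r_2=2$, $r_3=2$ (i.e.\ $t_2=4$, $t_3=5$) does \emph{not} ``follow by combining $b=1$ with $b=2$''. Both available combinations give only $t_{a+3}\le t_{a+1}+t_2\le t_a+2+4=t_a+6$ and $t_{a+3}\le t_{a+2}+t_1\le t_a+4+2=t_a+6$, one more than the required $t_a+5$. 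This case is not vacuous (a graph can contain an induced $2K_2$ but no induced $3K_2$), and it is exactly the kind of configuration for which Lemma \ref{thm-vanishing} is needed --- compare how Theorem \ref{a=4} disposes of $t_2=4$, $t_4=5$ by combining strand connectivity with Lemma \ref{thm-corner} to force $t_k\le k+1$ for all large $k$. You invoke Lemma \ref{thm-vanishing} only for $r_3=1$, so this branch is simply missing.

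Second, and more seriously, the engine of your argument --- Mayer--Vietoris applied to $\Delta=\mathrm{del}_\Delta(v)\cup\mathrm{star}_\Delta(v)$ --- is the wrong decomposition for this problem, and you yourself flag where it fails. The link branch lands on $\Delta\bigl[[n]\setminus N_G[v]\bigr]$ (not $\Delta[N_G(v)]$: in the independence complex the link of $v$ is induced on the \emph{non}-neighbours of $v$), a subcomplex from which an uncontrolled number $|N_G[v]|$ of vertices has been deleted while the homological degree drops by only one; consequently the index $i'$ of the resulting nonzero $\beta_{i',i'+r_a+1}$ cannot be located relative to $a$, and no induction on $(n,a)$ closes this. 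The decomposition that works --- used in Proposition \ref{pro1} here and in the argument you are reconstructing --- is $N=(N-v)\cup(N-\{x_1,x_2\})$ for a vertex $v$ of degree at least $2$ in the witness subgraph $H$ and two of its $G$-neighbours $x_1,x_2$ (valid because no face of $N$ contains $v$ together with $x_1$ or $x_2$). Each term of the resulting exact sequence deletes at most $3$ vertices and lowers the homological degree by at most $1$, yielding $t_{a+b}\le t_{a+b-1}+1$, or $t_{a+b}\le t_{a+b-2}+2$, or $t_{a+b}\le t_{a+b-2}+3$ --- exactly the quantitative control an induction on $b$ can absorb --- and the required vertex of degree at least $2$ exists precisely because $t_a<2a$ forbids a large induced matching (Lemma \ref{lem 1}). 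Until your link branch is replaced by something with this kind of control, the cases $r_2=1$ and $r_3=1$, the only genuinely new ones, remain unproved.
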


\section{The main Results}
We start with the following lemma that we will use in the main results.
\begin{lemma}\label{lem 1}
If $I=I(G)$ is the edge ideal of a graph $G$ and $t_a<2a$ for $a\geq 2$, then for each $r$ edges $\{v_1,v_2\},\dots,\{v_{2r-1},v_{2r}\}$ in $G$ where $r>a$, the induced subgraph $H$ on the vertices $v_1,\dots,v_{2r}$ has a vertex $v$ with $$\deg_H(v)\geq2.$$
\end{lemma}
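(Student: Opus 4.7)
The plan is to argue by contraposition: assume that the induced subgraph $H$ on $\{v_1,\ldots,v_{2r}\}$ satisfies $\deg_H(v)\leq 1$ for every vertex $v$, and derive a contradiction with the hypothesis $t_a<2a$.

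The first step is purely combinatorial. Each prescribed pair $\{v_{2i-1},v_{2i}\}$ is an edge of $G$ whose endpoints lie in $V(H)$, so it is automatically an edge of $H$. Combined with the degree bound $\deg_H(v)\leq 1$, this forces $H$ to be exactly the perfect matching on $\{v_1,\ldots,v_{2r}\}$ formed by the $r$ prescribed edges; in particular the $2r$ vertices are pairwise distinct. Now restrict attention to the subset $W=\{v_1,\ldots,v_{2a}\}\subset V(H)$. Since $H$ is already an induced subgraph of $G$, one has $G[W]=H[W]$, and this is a perfect matching of size $a$ on $W$.

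The second step is topological, via Hochster's formula. Let $\Delta$ be the simplicial complex with $I(G)=I_\Delta$, i.e.\ the independence complex of $G$. Then $\Delta[W]$ is the independence complex of a perfect matching of size $a$: each of the $a$ matching edges contributes two independent vertices (a copy of $S^0$), so $\Delta[W]$ is the join of $a$ zero-spheres, hence homotopy equivalent to the sphere $S^{a-1}$. In particular $\widetilde{H}_{a-1}(\Delta[W];K)\neq 0$. Applying Hochster's formula with $i=j=a$, so that $i+j=2a$ and $j-1=a-1$, the $W$-summand is nonzero, giving
\[
\beta_{a,2a}(S/I)\ \geq\ \dim_K \widetilde{H}_{a-1}(\Delta[W];K)\ \neq\ 0,
\]
so $t_a\geq 2a$, contradicting $t_a<2a$. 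The only substantive ingredient is the well-known identification of the independence complex of a perfect matching with a sphere; the rest is bookkeeping, and I do not expect any real obstacle.
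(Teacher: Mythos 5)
Your proposal is correct and follows essentially the same route as the paper: argue by contradiction that $H$ is an induced matching, then use Hochster's formula to get $\beta_{a,2a}(S/I)\neq 0$, contradicting $t_a<2a$. The only difference is that you spell out the sphere identification $\Delta[W]\simeq S^{a-1}$ explicitly, whereas the paper directly invokes the standard fact that $\beta_{a,2a}$ counts induced subgraphs consisting of $a$ disjoint edges.
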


\begin{proof}
Assume on contrary that $\deg_H(v)=1$ for all $v\in H$. It follows that $\beta_{a,2a}(S/I(H))\neq 0$ since $\beta_{a,2a}(S/I(H))$ is the number of induced subgraphs of $H$ which are $a$ disjoint edges. By Hochster's formula $$\beta_{a,2a}(S/I(H))\leq \beta_{a,2a}(S/I(G)),$$ so $\beta_{a,2a} (S/I(G))\neq 0$, a contradiction to the assumptions.
\end{proof}

\begin{proposition}\label{pro1}
Over any field, if $I=I(G)$ is the edge ideal of a graph $G$ and $t_a<2a$ for some $a\geq 2$, then for all $b\geq0$,
$$t_{a+b}\leq t_a+\lceil \frac{3b}{2} \rceil.$$
\end{proposition}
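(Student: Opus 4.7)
I would induct on $b$, with the pivotal case being $b=2$: the strengthened inequality $t_{a+2}\leq t_a+3$ under the hypothesis $t_a<2a$. The base cases $b=0$ (trivial) and $b=1$ (Herzog--Srinivasan subadditivity $t_{a+1}\leq t_a+t_1=t_a+2$, which is the $b=1$ case of Lemma \ref{thm-abederan}) both immediately match $\lceil 3b/2\rceil$.

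For the odd inductive step ($b\geq 3$, $b$ odd), I would combine $t_{a+b}\leq t_{a+b-1}+2$ (from the $b=1$ case) with the inductive bound $t_{a+b-1}\leq t_a+\tfrac{3(b-1)}{2}$ (the ceiling is exact because $b-1$ is even) to obtain $t_{a+b}\leq t_a+\tfrac{3b+1}{2}=t_a+\lceil 3b/2\rceil$. For the even inductive step ($b\geq 4$, $b$ even), the induction gives $t_{a+b-2}\leq t_a+\tfrac{3(b-2)}{2}<2a+\tfrac{3(b-2)}{2}\leq 2(a+b-2)$, so the hypothesis $t_{a'}<2a'$ is preserved at $a'=a+b-2$ and the key case $b=2$ applies there, yielding $t_{a+b}\leq t_{a+b-2}+3\leq t_a+\tfrac{3b}{2}$.

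The main obstacle is the key case $b=2$. Assuming $\beta_{a+2,\,t_a+4}\neq 0$ for contradiction, Lemma \ref{thm-corner} forces $\beta_{a+1,\,t_a+2}\neq 0$ or $\beta_{a+1,\,t_a+3}\neq 0$; the latter would contradict $t_{a+1}\leq t_a+2$, so $\beta_{a+1,\,t_a+2}\neq 0$. Hochster's formula then furnishes a witness $W\subseteq[n]$ with $|W|=t_a+4$ and $\widetilde H_{t_a-a+1}(\Delta[W];K)\neq 0$. I would then run a Mayer--Vietoris peeling on $W$: each split $\Delta[W']=\mathrm{star}(v)\cup\Delta[W'\setminus\{v\}]$ places the nonvanishing class in either the deletion or the link $\Delta[W'\setminus N_G[v]]$, and Hochster together with the established bounds on $t_a$ and $t_{a+1}$ exclude the deletion side at every stage. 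Selecting $v$ of degree one in the current induced subgraph peels off a genuine edge of $G$, and iterating should assemble the peeled edges into an induced matching of size exceeding $a$ in $G$, contradicting Lemma \ref{lem 1}.

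The delicate part---and where I expect the technical difficulty to lie---is arranging the successive vertex choices so that (i) a degree-one vertex in the current induced subgraph exists at every stage, and (ii) the peeled edges form an induced matching inside the original graph $G$ rather than merely a matching within a shrinking ambient. Both points require controlling the $G$-adjacencies between the vertex being peeled and the rest of $W$, tying the homological constraints arising from the staircase $\beta_{a,t_a}$, $\beta_{a+1,t_a+2}$, $\beta_{a+2,t_a+4}$ to the combinatorial structure forbidden by Lemma \ref{lem 1}.
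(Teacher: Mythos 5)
Your reduction skeleton is fine: the base cases, the odd step via $t_{a+b}\leq t_{a+b-1}+2$, and the even step via a verified preservation of the hypothesis $t_{a'}<2a'$ at $a'=a+b-2$ all check out arithmetically, and they correctly isolate the single claim ``$t_a<2a$ implies $t_{a+2}\leq t_a+3$'' as the whole content of the proposition. The problem is that this key claim is exactly what you do not prove, and the peeling strategy you sketch for it does not close. Your plan is to exclude the deletion side of each star/deletion Mayer--Vietoris split and iterate; but the exclusion only works for the first two peels. Stage one deletion would give $\beta_{a+1,t_a+3}\neq 0$ (contradicting $t_{a+1}\leq t_a+2$) and stage two deletion would give $\beta_{a,t_a+1}\neq 0$ (contradicting the definition of $t_a$), but stage three deletion only gives $\beta_{a-1,t_a-1}\neq 0$, which is not forbidden by anything you have established. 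So the iteration stalls after extracting two edges, far short of the $r>a$ edges needed to invoke Lemma \ref{lem 1}; moreover, a degree-one vertex need not exist at any given stage, and even when the peels go through, the peeled edges are only a matching in a shrinking ambient set, not an induced matching of $G$. These are precisely the difficulties you flag yourself, and they are not technicalities --- they are why this route fails.

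The paper's proof goes in the opposite direction on both counts. Lemma \ref{lem 1} is used \emph{positively}, not for a contradiction: applied to the witness set $W$ for $\beta_{a+b,t_{a+b}}\neq 0$ (which carries $a+b>a$ edges), it produces a vertex $v$ of degree \emph{at least} $2$ in $G[W]$, with neighbors $x_1,x_2$. One then uses the single decomposition $N=(N-v)\cup(N-\{x_1,x_2\})$ of $N=\Delta[W]$, whose Mayer--Vietoris sequence yields exactly one of $\beta_{a+b-1,t_{a+b}-1}\neq 0$, $\beta_{a+b-2,t_{a+b}-2}\neq 0$, or $\beta_{a+b-2,t_{a+b}-3}\neq 0$; in each branch the induction hypothesis gives the bound, the worst case being the intersection term, which loses $3$ in the shift for a drop of $2$ in homological index --- that is where the ratio $3/2$ comes from. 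No contradiction, no matching extraction, and no need for low-degree vertices. To repair your write-up you would need to replace the entire ``main obstacle'' paragraph with an argument of this kind (a two-neighbor cover exploiting a high-degree vertex), at which point your reduction to $b=2$ becomes an unnecessary detour, since the same single Mayer--Vietoris step runs the induction for all $b>1$ at once.
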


\begin{proof}
We prove the proposition by induction on $b$. If $b=0$ or $b=1$, then we are done. Let $b>1$. The Taylor resolution of $S/I$ shows that $G$ has $a+b$ edges on the vertices $v_1,\dots,v_{t_{a+b}}$ such that $\beta_{a+b,t_{a+b}}(S/I(H))\neq 0$ where $H$ is the induced subgraph on these vertices. Let $W \subseteq [n]$ such that $H=G[W]$ and denote $N=\Delta[W]$ where $\Delta$ is the simplicial complex such that $I_{\Delta}=I$. Since $t_a((S/I(H))<2a$, it follows by (\ref{lem 1}) that the graph $H$ has a vetex $v$ with $\deg_H(v)\geq2$.
Let $x_1,x_2$ be two neighbors of $v$ in $H$. Clearly, $N=(N-v)\cup(N-\{x_1,x_2\})$. Set $\Delta_1=N-v$ and $\Delta_2=N-\{x_1,x_2\}$.
Consider the long exact sequence of reduced homologies
\begin{align*}
\cdots \rightarrow \widetilde{H}_j(\Delta_1\cap\Delta_2;K)\rightarrow \widetilde{H}_j(\Delta_1;K)\oplus \widetilde{H}_j(\Delta_2;K)&\rightarrow \widetilde{H}_j(N;K)\\\rightarrow \widetilde{H}_{j-1}(\Delta_1\cap\Delta_2;K)\rightarrow\cdots
\end{align*}
where $j=t_{a+b}-(a+b)-1$.\\
If $\widetilde{H}_{j}(\Delta_1)\neq 0$, then $\beta_{a+b-1,t_{a+b}-1}(S/I(H))\neq0$ and so,
$$t_{a+b}(S/I)=t_{a+b}(S/I(H))\leq t_{a+b-1}(S/I(H))+1.$$ Our induction hypothesis implies that
\[
t_{a+b}(S/I)\leq t_a(S/I(H))+\lceil \frac{3(b-1)}{2} \rceil+1\leq t_a(S/I)+\lceil \frac{3b}{2}\rceil.
\]
So we may assume that $\widetilde{H}_{j}(\Delta_1)=0$.\\
If $\widetilde{H}_{j}(\Delta_2)\neq 0$, then $\beta_{a+b-2,t_{a+b}-2}(S/I(H))\neq0$, and so

$$t_{a+b}(S/I)=t_{a+b}(S/I(H))\leq t_{a+b-2}(S/I(H))+2.$$
If $b=2$, then
$$t_{a+2}(S/I)\leq t_a(S/I(H))+2\leq t_a(S/I))+2.$$ Let $b>2$.
The induction hypothesis implies that
\[
t_{a+b}(S/I)\leq t_a(S/I(H))+\lceil \frac{3(b-2)}{2} \rceil+2\leq t_a(S/I)+\lceil \frac{3b}{2}\rceil.
\]
So we may assume also that $\widetilde{H}_{j}(\Delta_2)=0$. Since $\widetilde{H}_j(N;K)\neq 0$, it follows that $\widetilde{H}_{j-1}(\Delta_1\cap\Delta_2;K)\neq 0$, and so $\beta_{a+b-2,t_{a+b}-3}(S/I(H))\neq0$. Then
$$t_{a+b}(S/I)=t_{a+b}(S/I(H))\leq t_{a+b-2}(S/I(H))+3.$$
If $b=2$, then
$$t_{a+2}(S/I)\leq t_a(S/I(H))+3\leq t_a(S/I))+3.$$ Let $b>2$.
Our induction hypothesis implies that
\[
t_{a+b}(S/I)\leq t_a(S/I(H))+\lceil \frac{3(b-2)}{2} \rceil+3\leq t_a(S/I)+\lceil \frac{3b}{2}\rceil.
\]
This completes the proof.
\end{proof}

\begin{theorem}\label{thm1}
Over any field, if $I=I(G)$ is the edge ideal of a graph $G$ and $t_b\geq \lceil \frac{3b}{2} \rceil$ for some $b\geq 0$, then for all $a\geq0$, $$t_{a+b}\leq t_a+t_b.$$
\end{theorem}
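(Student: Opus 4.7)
The plan is to combine Proposition \ref{pro1} with the universal bound $t_i \leq 2i$ (which holds for any edge ideal, since $I$ is generated in degree $2$, via the Taylor complex) through a short case analysis on whether each of $t_a$ and $t_b$ attains this bound. I would first dispose of small $a$: the cases $a = 0$ and $a = 1$ reduce to subadditivity with second index $0$ or $1$ after exchanging the roles of $a$ and $b$, and both are covered by Lemma \ref{thm-abederan}. So I may assume $a \geq 2$.

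If $t_a < 2a$, Proposition \ref{pro1} applies with the given $a$ and yields
\[
t_{a+b} \leq t_a + \lceil 3b/2 \rceil \leq t_a + t_b,
\]
where the second inequality is the standing hypothesis $t_b \geq \lceil 3b/2 \rceil$. If instead $t_a = 2a$, I would split on $b$. For $b \leq 1$ the conclusion is again Lemma \ref{thm-abederan}. For $b \geq 2$ with $t_b < 2b$, I would swap the roles of $a$ and $b$ in Proposition \ref{pro1} and use $\lceil 3a/2 \rceil \leq 2a = t_a$ to obtain
\[
t_{a+b} \leq t_b + \lceil 3a/2 \rceil \leq t_b + t_a.
\]
For $b \geq 2$ with $t_b = 2b$, the universal bound already gives $t_{a+b} \leq 2(a+b) = t_a + t_b$, closing the argument.

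The main obstacle is that Proposition \ref{pro1} requires the strict inequality $t_a < 2a$, so the boundary case $t_a = 2a$ is not directly accessible in the ``forward'' direction. The key observation that makes the argument close is that saturation of this bound at index $a$ is precisely what supplies enough room to absorb the extra $\lceil 3a/2 \rceil$ term produced by a role-swapped application of Proposition \ref{pro1}, while simultaneous saturation at both $a$ and $b$ lands directly on the desired inequality via the universal bound.
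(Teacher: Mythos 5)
Your proposal is correct, and its skeleton coincides with the paper's: dispose of $a\leq 1$ via the known $b\leq 1$ subadditivity, and for $a\geq 2$ with $t_a<2a$ apply Proposition \ref{pro1} together with the hypothesis $t_b\geq\lceil 3b/2\rceil$. The only divergence is in the saturated case $t_a=2a$. There the paper avoids any further splitting: it iterates the $b=1$ subadditivity (Lemma \ref{thm-abederan}) $a$ times to get $t_{a+b}\leq t_b+a\,t_1=t_b+2a=t_b+t_a$, which settles the case in one line without invoking Proposition \ref{pro1} again or the Taylor bound. You instead split on whether $t_b$ also saturates, using a role-swapped application of Proposition \ref{pro1} (legitimate, since its hypotheses $b\geq 2$ and $t_b<2b$ hold in that subcase, and $\lceil 3a/2\rceil\leq 2a=t_a$) and, when both saturate, the universal bound $t_i\leq 2i$ from the Taylor complex. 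Both routes are valid; the paper's is marginally shorter and exploits the already-cited $b=1$ result more directly, while yours has the mild virtue of not needing to iterate that result $a$ times, at the cost of one extra case distinction.
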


\begin{proof}
If $a=0$ or $a=1$, then we are done. Assume that $a\geq 2$. If $t_a=2a$, then
\[
t_{a+b}\leq at_1+t_b\leq 2a+t_b=t_a+t_b.
\]
So assume that $t_a<2a$. By (\ref{pro1}),
\[
t_{a+b}\leq t_a+\lceil \frac{3b}{2} \rceil\leq t_a+t_b,
\]
as desired.
\end{proof}

\begin{theorem}\label{a=4}
Over any field, if $I=I(G)$ is the edge ideal of a graph $G$, then for all $a\geq0$,
$$t_{a+4}\leq t_a+t_4.$$
\end{theorem}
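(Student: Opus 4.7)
The plan is to reduce via Theorem~\ref{thm1} to the critical case $t_4=5$, dispatch easy subcases, and then complete the argument with a row-2 split. If $\beta_4(S/I)=0$, the statement is vacuous; if $t_4\geq 6=\lceil 3\cdot 4/2\rceil$, then Theorem~\ref{thm1} applied with $b=4$ yields $t_{a+4}\leq t_a+t_4$ directly. So I may assume $t_4=5$. For $a=0$ the inequality is trivial; for $a=1$ it follows from Lemma~\ref{thm-abederan} with $b=1$ (giving $t_5\leq t_4+t_1=7=t_1+t_4$). For $a\geq 2$, I apply Theorem~\ref{thm1} again, now with the roles of $a$ and $b$ swapped (our $a$ playing the role of the theorem's ``$b$''): whenever $t_a\geq\lceil 3a/2\rceil$, we obtain $t_{a+4}\leq t_4+t_a=t_a+5$. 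So the remaining case is $a\geq 2$ with $t_a<\lceil 3a/2\rceil$; in particular $t_a<2a$.

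In this remaining case, two bounds are available: Proposition~\ref{pro1} applied with $(a,4)$ gives $t_{a+4}\leq t_a+6$, and iterating Lemma~\ref{thm-corner} starting from $\beta_{4,j}(S/I)=0$ for all $j\geq 6$ yields $t_i\leq 2i-3$ for all $i\geq 4$, hence $t_{a+4}\leq 2a+5$. Both are one unit larger than the desired bound. To close the gap, I invoke Lemma~\ref{thm-vanishing}: since $\beta_{4,6}(S/I)=0$, the positions $i$ with $\beta_{i,i+2}(S/I)\neq 0$ form an interval avoiding $4$, and so lie entirely in $\{i\leq 3\}$ or entirely in $\{i\geq 5\}$.

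If this interval is contained in $\{i\leq 3\}$, then $\beta_{i,i+2}(S/I)=0$ for every $i\geq 4$; combined with $t_4=5$, a straightforward induction using Lemma~\ref{thm-corner} upgrades this to $t_i\leq i+1$ for all $i\geq 4$, whence $t_{a+4}\leq a+5\leq t_a+4\leq t_a+5$ since $t_a\geq a+1$. If instead the interval lies in $\{i\geq 5\}$, then $\beta_{2,4}(S/I)=\beta_{3,5}(S/I)=0$, so $G$ contains no induced $2K_2$, $t_2\leq 3$, and $t_3\leq 4$. My plan here is to refine the Mayer--Vietoris analysis behind Proposition~\ref{pro1} at $b=4$: the bound $t_a+6$ arises from two consecutive worst branches of the recursion (the case $\widetilde{H}_{j-1}(\Delta_1\cap\Delta_2)\neq 0$), each contributing $+3$ while reducing $b$ by $2$. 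Since the vanishings $\beta_{2,4}=\beta_{3,5}=0$ are inherited by every induced subgraph of $G$ by monotonicity of Betti numbers, I aim to show that two consecutive worst branches produce a nonzero Betti number on some intermediate induced subgraph that is forbidden by these vanishings, forcing the next-worst branch ($+2$) at least once, and thereby yielding $t_{a+4}\leq t_a+5$.

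The main obstacle is precisely this last step: carefully tracking the induced subgraphs and Betti numbers that appear in the Mayer--Vietoris recursion and verifying that the row-2 vanishings of the second subcase preclude two consecutive worst branches. This combinatorial bookkeeping is the technical heart of the proof.
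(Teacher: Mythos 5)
Your reductions are sound and track the paper's own opening moves closely: the appeal to Theorem~\ref{thm1} to force $t_4=5$, the use of Lemma~\ref{thm-vanishing} to place the row-two strand $\{i:\beta_{i,i+2}\neq 0\}$ in an interval avoiding $4$, and the Lemma~\ref{thm-corner} induction giving $t_i\leq i+1$ when that interval sits in $\{i\leq 3\}$ all appear (in slightly different packaging) in the actual proof, which phrases the same dichotomy as ``$t_2=4$ or $t_3=5$'' versus ``$t_2=3$ and $t_3=4$.'' But the case you correctly identify as the heart of the matter --- $t_2=3$, $t_3=4$, $t_4=5$, $t_a<2a$ --- is left as a plan rather than a proof, and the plan you sketch is unlikely to work as stated. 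The ``worst branches'' of the recursion in Proposition~\ref{pro1} produce nonvanishing Betti numbers in homological degrees $a+2$ and $a$ and in row $t_{a+4}-(a+4)-2$ of the Betti table; these are not row-two entries (unless $t_{a+4}=a+8$), so the vanishings $\beta_{2,4}=\beta_{3,5}=0$ do not directly forbid two consecutive worst branches, and no mechanism is offered for converting them into such a prohibition.

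What the paper actually does in this case is different and essentially graph-theoretic. After reducing to $a\geq 4$ and $t_{a+4}\geq a+7$ (so the witnessing induced subgraph $H$ on $t_{a+4}$ vertices has at least $11$ vertices), it uses the absence of an induced $2K_2$ (from $t_2=3$) together with the constraints $t_3=4$, $t_4=5$ to run a case analysis on a $5$-vertex induced subgraph $H'$ and conclude that $H$ must contain a vertex $v$ of degree at least $4$. This permits a \emph{single} Mayer--Vietoris split $N=(N-v)\cup(N-\{x_1,x_2,x_3,x_4\})$ that drops the homological degree by $4$ at once: the three outcomes give $\beta_{a+3,t_{a+4}-1}\neq 0$ (whence $t_{a+4}\leq t_{a+3}+1\leq t_a+t_3+1=t_a+5$), $\beta_{a,t_{a+4}-4}\neq 0$, or $\beta_{a,t_{a+4}-5}\neq 0$, each yielding $t_{a+4}\leq t_a+5$. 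The existence of the high-degree vertex, not a refinement of the two-step recursion, is the missing idea; without it (or a worked-out substitute) your argument does not close the one-unit gap.
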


\begin{proof}
By (\ref{thm1}) we may assume $t_4=5$ (note that $5\leq t_4\leq 8$). If $t_2=4$, then combined with $t_4=5$, lemma (\ref{thm-vanishing}) says $\beta_{3+k,5+k}(S/I)=0$ for any $k>0$. Further, by lemma (\ref{thm-corner}) we conclude that $t_k\leq k+1$ for all $k\geq 4$. So $$t_{a+4}\leq a+5\leq t_a+t_4$$ as desired.  Thus, we may assume $t_2=3$ and then $G$ has no two disjoint edges which form an induced subgraph. Similarly, we may assume $t_3=4$. If not then by (\ref{thm-corner}), $t_3=5$ and then $\beta_{3+k,5+k}(S/I)=0$ for any $k>0$ by (\ref{thm-vanishing}). Also we obtain that $t_{a+4}\leq a+5\leq t_a+t_4$.

By (\ref{thm-abederan}, we may also assume $a\geq4$. If $t_{a+4}\leq a+6$, then we are done:
\[
t_a+t_4\geq a+1+5=a+6\geq t_{a+4}.
\]
Let $t_{a+4}\geq a+7$.
The Taylor resolution of $S/I$ shows that $G$ has $a+4$ edges on the vertices $v_1,\dots,v_{t_{a+4}}$ such that $\beta_{a+4,t_{a+4}}(S/I(H))\neq 0$ where $H$ is the induced subgraph on these vertices. Let $W \subseteq [n]$ such that $H=G[W]$ and denote $N=\Delta[W]$ where $\Delta$ is the simplicial complex such that $I_{\Delta}=I$. Let $W'\subset W$ such that $|W'|=5$ and $H'=G[W']$ is the (induced) subgraph of $H$ with

\begin{equation}\label{eq1}
 \small t_2(S/I(H'))=3,~t_3(S/I(H'))=4~,t_4(S/I(H'))=5,~\mathrm{and}~\deg_{H'}(v)\geq1.
\end{equation}

First, we claim that there exists a vertex $v$ in $H'$ such that $\deg_{H'}(v)\geq3$. If not, then $\deg_{H'}(v)\leq 2$ and so $H'$ is either an induced $C_5$, or a path with $4$ edges. This contradicts (\ref{eq1}). Second, we show that the graph $H$ has a vertex of degree at least $4$. Assume on contrary that $\deg_{H}(w)\leq3$ for all $w$ in $H$. After suitable renumbering of the vertices, $H'$ has the following proper subgraph:

\begin{figure}[h]
\begin{tikzpicture}[line cap=round,line join=round,>=triangle 45,x=1cm,y=1cm]
\clip(-1.8565123791498606,0.7628499658835928) rectangle (7.659494782255121,4.9409718601879975);
\draw [line width=1.2pt] (3,4)-- (1.3933025525903067,2.274950487304406);
\draw [line width=1.2pt] (3,4)-- (4.547515163294998,2.2220496468733);
\draw [line width=1.2pt] (4.547515163294998,2.2220496468733)-- (4.309461381355021,1.1375824180356229);
\draw (2.983269388095954,4.442868360333201) node[anchor=north west] {1};
\draw (1.0726335752201102,2.19025104009435) node[anchor=north west] {2};
\draw (2.968400626906259,2.175382278904655) node[anchor=north west] {3};
\draw [line width=1.2pt] (3,4)-- (2.9869403705773725,2.3543017479510655);
\draw (4.7898238726439315,2.3017667490170655) node[anchor=north west] {4};
\draw (4.4701455070654825,1.1345689956259775) node[anchor=north west] {5};
\begin{scriptsize}
\draw [fill=black] (3,4) circle (2.5pt);
\draw [fill=black] (1.3933025525903067,2.274950487304406) circle (2.5pt);
\draw [fill=black] (4.547515163294998,2.2220496468733) circle (2.5pt);
\draw [fill=black] (4.309461381355021,1.1375824180356229) circle (2.5pt);
\draw [fill=black] (2.9869403705773725,2.3543017479510655) circle (2.5pt);
\end{scriptsize}
\end{tikzpicture}
\caption{}
\label{subgraph}
\end{figure}
Note that the graph in figure (\ref{subgraph}) do not satisfies (\ref{eq1}). If $\{2,4\}\in E(H)$ or $\{3,4\}\in E(H)$, then there is necessarily an edge $\{i,j\}\in E(H)$ such that $i,j\notin \{1,\dots,5\}$, since $1\leq \deg_{H}(w)\leq3$ for all $w$ in $H$ and $H$ has at least $11$ vertices. Then the two disjoint edges $\{i,j\}$ and $\{1,4\}$ form an induced subgraph. This contradicts the assumption $t_2=3$. So assume that $\{2,4\}\notin E(H)$ and $\{3,4\}\notin E(H)$. If $\{2,3\}\in E(H)$, then since $t_2=3$, $\{2,5\}\in E(H)$ or $\{3,5\}\in E(H)$. In the two cases, we must have an edge $\{i,j\}\in E(H)$ such that $i,j\notin \{1,\dots,5\}$. Such edge with $\{1,2\}$ (or $\{1,3\}$) are two disjoint edges that form an induced subgraph, a contradiction. So we may also assume that $\{2,3\}\notin E(H)$.

We conclude that the only option of the graph $H'$ is:

\begin{figure}[h]
\begin{tikzpicture}[line cap=round,line join=round,>=triangle 45,x=1cm,y=1cm]
\clip(-1.149877575667689,0.7831255493240843) rectangle (7.5010380256095655,4.581418180509909);
\draw [line width=1.2pt] (3,4)-- (1.3933025525903067,2.274950487304406);
\draw [line width=1.2pt] (3,4)-- (4.547515163294998,2.2220496468733);
\draw [line width=1.2pt] (4.547515163294998,2.2220496468733)-- (4.309461381355021,1.1375824180356229);
\draw (2.9863414461929985,4.432730568612954) node[anchor=north west] {1};
\draw (1.0736780749731054,2.175382278904653) node[anchor=north west] {2};
\draw (2.966065862752505,2.1686237510911552) node[anchor=north west] {3};
\draw [line width=1.2pt] (3,4)-- (2.9869403705773725,2.3543017479510655);
\draw (4.784109844583428,2.2970357795476155) node[anchor=north west] {4};
\draw (4.4732175651625266,1.127810467812477) node[anchor=north west] {5};
\draw [line width=1.2pt] (4.309461381355021,1.1375824180356229)-- (2.9869403705773725,2.3543017479510655);
\draw [line width=1.2pt] (4.309461381355021,1.1375824180356229)-- (1.3933025525903067,2.274950487304406);
\begin{scriptsize}
\draw [fill=black] (3,4) circle (2.5pt);
\draw [fill=black] (1.3933025525903067,2.274950487304406) circle (2.5pt);
\draw [fill=black] (4.547515163294998,2.2220496468733) circle (2.5pt);
\draw [fill=black] (4.309461381355021,1.1375824180356229) circle (2.5pt);
\draw [fill=black] (2.9869403705773725,2.3543017479510655) circle (2.5pt);
\end{scriptsize}
\end{tikzpicture}
\caption{$H'$}
\label{subgraph2}
\end{figure}
In this case, there is two distinct edges $\{i_1,j_1\}\in E(H)$ and $\{i_2,j_2\}\in E(H)$ such that $i_1,j_1,i_2,j_2\notin \{1,\dots,5\}$. Since $t_2=3$, each vertex of the set $\{2,3,4\}$ is connected to $i_1$ or $j_1$ by edge. Similarly, each vertex of the set $\{2,3,4\}$ is connected to $i_2$ or $j_2$ by edge. This contradicts the assumption $\deg_{H}(w)\leq3$ for all $w$ in $H$. We conclude that there is a vertex $v$ in $H$ such that $\deg_{H}(v)\geq4$.

Let $v\in W$ with $\deg_H(v)\geq4$. Let $x_1,x_2,x_3,x_4$ be four neighbors of $v$ in $H$. Clearly, for $N=\Delta[W]$,  $N=(N-v)\cup(N-\{x_1,x_2,x_3,x_4\})$. Set $\Delta_1=N-v$ and $\Delta_2=N-\{x_1,x_2,x_3,x_4\}$. We may assume that $\widetilde{H}_{j}(\Delta_1)\neq0$, where $j=t_{a+4}-(a+4)-1$. For otherwise, we obtain that $\beta_{a+3,t_{a+4}-1}(S/I)\neq0$ and so $$t_{a+4}\leq t_{a+3}+1\leq t_a+t_3+1=t_a+5=t_a+t_4$$ as desired.
Using the Mayer-Vietoris sequence we have $\widetilde{H}_j(\Delta_2)\neq0$ or $\widetilde{H}_{j-1}(\Delta_1\cap\Delta_2)\neq0$. If $\widetilde{H}_j(\Delta_2)\neq0$, then $\beta_{a,t_{a+4}-4}(S/I)\neq0$, and so $$t_{a+4}\leq t_{a}+4 < t_a+t_4.$$
Finally, if $\widetilde{H}_{j-1}(\Delta_1\cap\Delta_2)\neq0$, then $\beta_{a,t_{a+4}-5}(S/I)\neq0$, and so $$t_{a+4}\leq t_a+5=t_a+t_4.$$

\end{proof}

Combining theorem (\ref{a=4}) with lemma (\ref{thm-abederan}) we obtain the following.

\begin{corollary}
For any edge ideal over any field, the subadditivity condition holds for $0\leq b\leq 4$ and any  $a\geq 0$.
\end{corollary}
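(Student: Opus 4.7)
The plan is simply to assemble the corollary from the two results it cites, handling $b\in\{0,1,2,3,4\}$ case by case. The case $b=0$ is trivial: since $F_0=S$ in the minimal graded free resolution, $t_0(S/I)=0$, so the required inequality $t_{a+0}(S/I)\leq t_a(S/I)+t_0(S/I)$ reduces to $t_a\leq t_a$, which is an equality. The cases $b=1,2,3$ are precisely the content of Lemma \ref{thm-abederan}, proved by the author and Nevo, so nothing further is needed. The remaining case $b=4$ is exactly Theorem \ref{a=4}, which has just been established.

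Thus I would write the proof as a one-line case analysis: for $b=0$ use $t_0=0$ directly; for $b\in\{1,2,3\}$ invoke Lemma \ref{thm-abederan}; for $b=4$ invoke Theorem \ref{a=4}. Since these cover all values $0\le b\le 4$ and $a\ge 0$ is arbitrary in each of the cited statements, the subadditivity condition $t_{a+b}(S/I)\leq t_a(S/I)+t_b(S/I)$ follows in the stated range.

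There is essentially no obstacle here; the only substantive point to verify is the $b=0$ case, which is immediate from the structure of the minimal free resolution and does not require any of the topological-combinatorial machinery developed in the paper. All the difficulty has been absorbed into Theorem \ref{a=4} (for the new case $b=4$) and into the prior Lemma \ref{thm-abederan}.
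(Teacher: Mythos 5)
Your proof is correct and matches the paper exactly: the paper obtains this corollary by combining Theorem \ref{a=4} (the $b=4$ case) with Lemma \ref{thm-abederan} (the cases $b=1,2,3$), with $b=0$ being trivial since $t_0=0$. Nothing further is needed.
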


Finally, we state the following corollary.

\begin{corollary}
Over any field, if $I=I(G)$ is the edge ideal of a graph $G$ and $t_j\geq \lceil \frac{3j}{2} \rceil$ for all $5\leq j\leq \lfloor \frac{\mathrm{pd}_S(S/I)}{2}\rfloor$, then $I$ satisfies the subadditivity condition. In particular, the subadditivity condition holds if $\mathrm{pd}_S(S/I)\leq 9$
\end{corollary}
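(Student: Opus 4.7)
The plan is to exploit the symmetry of the subadditivity inequality and then split into two cases, invoking the preceding corollary for small $b$ and Theorem~\ref{thm1} for larger $b$.

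First, since the inequality $t_{a+b}\le t_a+t_b$ is symmetric in $a$ and $b$, it suffices to verify it under the assumption $b\le a$. Under this assumption, the constraint $a+b\le \mathrm{pd}_S(S/I)$ forces $2b\le \mathrm{pd}_S(S/I)$, and hence $b\le \lfloor \mathrm{pd}_S(S/I)/2\rfloor$. This is the crucial observation that pins $b$ inside the range where our hypotheses are useful.

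Next I would split on the value of $b$. If $b\le 4$, the desired inequality is exactly the content of the preceding corollary. If instead $5\le b\le \lfloor \mathrm{pd}_S(S/I)/2\rfloor$, then the hypothesis of this corollary supplies $t_b\ge \lceil 3b/2\rceil$, so Theorem~\ref{thm1} immediately gives $t_{a+b}\le t_a+t_b$. Together these two cases cover every pair $(a,b)$ with $b\le a$ and $a+b\le \mathrm{pd}_S(S/I)$, and by symmetry cover every admissible pair.

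For the ``in particular'' clause, when $\mathrm{pd}_S(S/I)\le 9$ one has $\lfloor \mathrm{pd}_S(S/I)/2\rfloor\le \lfloor 9/2\rfloor=4$, so the range $5\le j\le \lfloor \mathrm{pd}_S(S/I)/2\rfloor$ is empty and the hypothesis $t_j\ge \lceil 3j/2\rceil$ holds vacuously; the first part then applies. Equivalently, in the symmetry-reduced setting $b\le a$ and $a+b\le 9$ directly force $b\le 4$, so the preceding corollary already suffices on its own. I do not expect any serious obstacle here: this is a bookkeeping argument that stitches together the two main results of the paper through the symmetry of the subadditivity inequality.
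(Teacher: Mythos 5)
Your proof is correct and follows essentially the same route as the paper: reduce by symmetry to $b\le a$, dispose of $b\le 4$ via Lemma~\ref{thm-abederan} and Theorem~\ref{a=4}, note that $a+b\le \mathrm{pd}_S(S/I)$ forces $5\le b\le \lfloor \mathrm{pd}_S(S/I)/2\rfloor$ in the remaining case, and apply Theorem~\ref{thm1}. Your explicit treatment of the ``in particular'' clause (the hypothesis being vacuous when $\mathrm{pd}_S(S/I)\le 9$) is a welcome detail the paper leaves implicit.
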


\begin{proof}
By lemma (\ref{thm-abederan}) and theorem (\ref{a=4}) we may assume that $a>4$, $b>4$. we also assume that $a\geq b$. Since $a+b\leq \mathrm{pd}_S(S/I)$, we obtain that $5\leq b\leq \lfloor \frac{\mathrm{pd}_S(S/I)}{2}\rfloor$. Hence the assertion follows from (\ref{thm1}).
\end{proof}

\emph{Data sharing not applicable to this article as no datasets were generated or analysed during the current study.}

\end{document}